\begin{document}

\newtheorem{theorem}{Theorem}
\newtheorem{lemma}[theorem]{Lemma}
\newtheorem{claim}[theorem]{Claim}
\newtheorem{cor}[theorem]{Corollary}
\newtheorem{prop}[theorem]{Proposition}
\newtheorem{definition}{Definition}
\newtheorem{question}[theorem]{Question}
\newtheorem{remark}[theorem]{Remark}
\newcommand{\hh}{{{\mathrm h}}}

\numberwithin{equation}{section}
\numberwithin{theorem}{section}
\numberwithin{table}{section}

\def\sssum{\mathop{\sum\!\sum\!\sum}}
\def\ssum{\mathop{\sum\ldots \sum}}
\def\iint{\mathop{\int\ldots \int}}

\def\squareforqed{\hbox{\rlap{$\sqcap$}$\sqcup$}}
\def\qed{\ifmmode\squareforqed\else{\unskip\nobreak\hfil
\penalty50\hskip1em\null\nobreak\hfil\squareforqed
\parfillskip=0pt\finalhyphendemerits=0\endgraf}\fi}

\newfont{\teneufm}{eufm10}
\newfont{\seveneufm}{eufm7}
\newfont{\fiveeufm}{eufm5}
%
%
\newfam\eufmfam
     \textfont\eufmfam=\teneufm
\scriptfont\eufmfam=\seveneufm
     \scriptscriptfont\eufmfam=\fiveeufm
%
%
\def\frak#1{{\fam\eufmfam\relax#1}}

\newcommand{\Mod}[1]{ \hspace{-2.5mm} \mod{#1}}
\newcommand{\Pmod}[1]{ \hspace{-2.5mm} \pmod{#1}}

\newcommand{\bflambda}{{\boldsymbol{\lambda}}}
\newcommand{\bfmu}{{\boldsymbol{\mu}}}
\newcommand{\bfxi}{{\boldsymbol{\xi}}}
\newcommand{\bfrho}{{\boldsymbol{\rho}}}

\newcommand{\bfalpha}{{\boldsymbol{\alpha}}}
\newcommand{\bfbeta}{{\boldsymbol{\beta}}}
\newcommand{\bfphi}{{\boldsymbol{\varphi}}}
\newcommand{\bfpsi}{{\boldsymbol{\psi}}}
\newcommand{\bftheta}{{\boldsymbol{\vartheta}}}

\def\fK{Frak K}
\def\fT{Frak{T}}

\def\fA{{Frak A}}
\def\fB{{Frak B}}
\def\fC{\mathfrak{C}}

\def \balpha{\bm{\alpha}}
\def \bbeta{\bm{\beta}}
\def \bgamma{\bm{\gamma}}
\def \blambda{\bm{\lambda}}
\def \bchi{\bm{\chi}}
\def \bphi{\bm{\varphi}}
\def \bpsi{\bm{\psi}}

\def\eqref#1{(\ref{#1})}

\def\vec#1{\mathbf{#1}}


\def\cA{{\mathcal A}}
\def\cB{{\mathcal B}}
\def\cC{{\mathcal C}}
\def\cD{{\mathcal D}}
\def\cE{{\mathcal E}}
\def\cF{{\mathcal F}}
\def\cG{{\mathcal G}}
\def\cH{{\mathcal H}}
\def\cI{{\mathcal I}}
\def\cJ{{\mathcal J}}
\def\cK{{\mathcal K}}
\def\cL{{\mathcal L}}
\def\cM{{\mathcal M}}
\def\cN{{\mathcal N}}
\def\cO{{\mathcal O}}
\def\cP{{\mathcal P}}
\def\cQ{{\mathcal Q}}
\def\cR{{\mathcal R}}
\def\cS{{\mathcal S}}
\def\cT{{\mathcal T}}
\def\cU{{\mathcal U}}
\def\cV{{\mathcal V}}
\def\cW{{\mathcal W}}
\def\cX{{\mathcal X}}
\def\cY{{\mathcal Y}}
\def\cZ{{\mathcal Z}}
\newcommand{\rmod}[1]{\: \text{mod} \: #1}

\def\cg{{\mathcal g}}

\def\vr{\mathbf r}

\def\e{{\mathbf{\,e}}}
\def\ep{{\mathbf{\,e}}_p}
\def\em{{\mathbf{\,e}}_m}

\def\Tr{{\mathrm{Tr}}}
\def\Nm{{\mathrm{Nm}\,}}

 \def\SS{{\mathbf{S}}}

\def\lcm{{\mathrm{lcm}}}
\def\ord{{\mathrm{ord}}}

\def\({\left(}
\def\){\right)}
\def\fl#1{\left\lfloor#1\right\rfloor}
\def\rf#1{\left\lceil#1\right\rceil}

\def\mand{\qquad \text{and} \qquad}

\newcommand{\commM}[1]{\marginpar{%
\begin{color}{red}
\vskip-\baselineskip 
\raggedright\footnotesize
\itshape\hrule \smallskip M: #1\par\smallskip\hrule\end{color}}}

\newcommand{\commI}[1]{\marginpar{%
\begin{color}{magenta}
\vskip-\baselineskip 
\raggedright\footnotesize
\itshape\hrule \smallskip I: #1\par\smallskip\hrule\end{color}}}

\newcommand{\commK}[1]{\marginpar{%
\begin{color}{blue}
\vskip-\baselineskip 
\raggedright\footnotesize
\itshape\hrule \smallskip K: #1\par\smallskip\hrule\end{color}}}




\hyphenation{re-pub-lished}

\mathsurround=1pt

\def\bfdefault{b}
\overfullrule=5pt

\def \F{{\mathbb F}}
\def \K{{\mathbb K}}
\def \N{{\mathbb N}}
\def \Z{{\mathbb Z}}
\def \Q{{\mathbb Q}}
\def \R{{\mathbb R}}
\def \C{{\mathbb C}}
\def\Fp{\F_p}
\def \fp{\mathfrak p}
\def \fq{\mathfrak q}

\def\ZK{\Z_K}

\def \xbar{\overline x}
\def\e{{\mathbf{\,e}}}
\def\ep{{\mathbf{\,e}}_p}
\def\eq{{\mathbf{\,e}}_q}


\title[Products in arithmetic progressions]{On products of primes and square-free integers in arithmetic progressions}

\author[Kam Hung Yau]{Kam Hung Yau}

\address{Department of Pure Mathematics, University of New South Wales,
Sydney, NSW 2052, Australia}
\email{kamhung.yau@unsw.edu.au}

\begin{abstract}
We obtain an asymptotic formula for the number of ways to represent every reduced residue class as a product of a prime and square-free integer. This may be considered as a relaxed version of a conjecture of Erd\"os, Odlyzko, and S\'ark\"ozy.
\end{abstract}

\keywords{Kloosterman sums, congruences}
\subjclass[2010]{11L05, 11A07}

\maketitle

\section{Introduction}
 
A conjecture of Erd\"os, Odlyzko, and S\'ark\"ozy~\cite{EOS} asks if for every reduced residue class $a$ modulo $m$ can be represented as a product
\begin{equation} \label{eq: EOS conj}
p_1 p_2 \equiv a \hspace{-3mm} \pmod{m}
\end{equation}
for two primes $p_1, p_2 \le m$. Friedlander, Kurlberg, and Shparlinski~\cite{FKS} considered an average of~\eqref{eq: EOS conj} over $a$ and $m$, and also various modification of~\eqref{eq: EOS conj}. Garaev~\cite{G,G2} improved on these modifications. Other interesting variants of~\eqref{eq: EOS conj} had also been considered by Baker~\cite{B}, Ramar\'e \& Walker~\cite{RW}, Shparlinski~\cite{S,S2}, Walker~\cite{W}.

In this paper, we are concerned with bounding the quantity
$$
\# \left \{(p,s):  ps \equiv a \hspace{-3mm} \pmod{q}, p\le P , s\le S, \mu^2(s)=1, (ps,q)=1 \right  \}
$$
for $(a,q)=1$.
This may also be viewed as a multiplicative analogue in the setting of finite fields of a result of Estermann~\cite{E}.
Estermann~\cite{E} showed that all sufficiently large positive integer can be written as a sum of a prime and a square-free integer, see also~\cite{M1,P1}. Recently, Dudek~\cite{D} showed that this is true for all positive integer greater than two.

Our method uses the nice factoring property of the characteristic function for square-free integers
\begin{equation} \label{eqn: sqfre}
\mu^2(n) = \sum_{d^2|n} \mu(d),
\end{equation}
together with bounds for Kloosterman sums over primes supplied  by Fourvy \& Shparlinski~\cite{FS}, extending those previous result of Garaev~\cite{G}.

\section{Notation}

The notation $U =O(V)$ is equivalent to $U \ll V$ and means there exist an absolute constant $C>0$ such that $U \le CV$.
Exclusively $p$ is a prime number, $\mu$ the M\"obius function, $\tau(n)$ is the number of positive divisors of $n$, and $\varphi(n)$ is the number of positive integer up to $n$ coprime to $n$.

\section{Result}

We denote
\begin{align*}
\pi_q(P) &= \# \{ p\le P : (p,q)=1 \} \\
\intertext{to be the number of primes up to $P$ coprime to $q$, and}
s_q(S) &= \# \{ s \le S : \mu^2(s)=1 , (s,q)=1 \}
\end{align*}
to be the number of square-free integers up to $S$ coprime to $q$.

Finally, for $(a,q)=1$,
denote $\cN_{a,q}^{\#}(P,S)$ by the quantity
\begin{align*}
\# \left \{(p,s):  ps \equiv a \hspace{-3mm} \pmod{q}, p\le P , s\le S, \mu^2(s)=1, (ps,q)=1 \right  \}.
\end{align*}
\begin{theorem} \label{thm: asy bound}
For all fixed $A,\varepsilon >0$, we have
$$
\cN_{a,q}^{\#}(P,S) = \frac{\pi_q(P) s_q(S)}{q} + O \left ( (PS)^{o(1)} S^{1/2} E \right ),
$$
uniformly for $q \le P^{O(1)}$ and  $(a,q)=1$, where
$$
E =
\begin{cases}
\displaystyle P q^{-1}  & \mbox{if $q \le (\log P)^A$,} \\[2pt]
\displaystyle \frac{P }{q^{3/4}}  + \frac{P^{9/10}  }{q^{3/8} }  & \mbox{if $(\log P)^A < q < P^{3/4}$,} \\[10pt]
\displaystyle  \frac{P^{31/32}}{q^{(1-\varepsilon)/2}} + \frac{P^{5/6}}{q^{(3/4-\varepsilon)/2}}    & \mbox{if $P^{3/4} \le q$.}
\end{cases}
$$
\end{theorem}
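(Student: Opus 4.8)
The plan is to detect the congruence $ps\equiv a\pmod q$ by fixing $p$ and counting the admissible $s$ in a single residue class, to strip off the square-free condition through the factorisation~\eqref{eqn: sqfre}, and then to recognise the remaining sum over $p$ as an incomplete Kloosterman sum over primes, which is bounded by Fouvry \& Shparlinski~\cite{FS}.

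First I would write, with $\overline{p}$ the inverse of $p$ modulo $q$,
\[
\cN_{a,q}^{\#}(P,S)=\sum_{\substack{p\le P\\(p,q)=1}}\#\{s\le S:\ \mu^2(s)=1,\ (s,q)=1,\ s\equiv a\overline{p}\pmod q\}.
\]
Inserting~\eqref{eqn: sqfre} and writing $s=d^2e$, the coprimality $(s,q)=1$ forces $(d,q)=(e,q)=1$, so only $d$ with $(d,q)=1$ survive and $e$ runs through the single class $e\equiv a\overline{p}\,\overline{d^2}\pmod q$, which is automatically coprime to $q$. Detecting this class by additive characters, with $\eq(z)=\exp(2\pi i z/q)$, gives
\[
\cN_{a,q}^{\#}(P,S)=\frac1q\sum_{\substack{d\le S^{1/2}\\(d,q)=1}}\mu(d)\sum_{h=0}^{q-1}\(\sum_{e\le S/d^2}\eq(he)\)\sum_{\substack{p\le P\\(p,q)=1}}\eq\(-ha\,\overline{d^2}\,\overline{p}\).
\]
The diagonal term $h=0$ equals $\tfrac1q\,\pi_q(P)\sum_{d\le S^{1/2},\,(d,q)=1}\mu(d)\fl{S/d^2}$; evaluating the $d$-sum by the standard asymptotic for square-free integers coprime to $q$ recovers the main term displayed in the theorem up to an error well inside the claimed bound.

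The core of the argument is the off-diagonal range $h\ne 0$. Here the inner sum over $p$ is exactly an incomplete Kloosterman sum over primes, namely $\sum_{p\le P,\,(p,q)=1}\eq(c\,\overline{p})$ with $c\equiv-ha\,\overline{d^2}\pmod q$, to which I would apply the bounds of~\cite{FS}; these bounds, uniform in $c$ coprime to $q$, are precisely the quantity $E$ in the three ranges of $q$ listed in the statement. When $\gcd(h,q)=g>1$ one descends to the modulus $q/g$, on which the relevant residue becomes coprime, and applies the corresponding estimate. Combining a uniform bound $|\sum_{p}\eq(c\overline p)|\ll E$ with the elementary estimate $|\sum_{e\le S/d^2}\eq(he)|\ll\min(S/d^2,\ q/|h|)$, the sum over $1\le|h|\le q/2$ contributes a factor $q(PS)^{o(1)}$ for each $d$, and the sum over $d\le S^{1/2}$ contributes the factor $S^{1/2}$; the prefactor $1/q$ then cancels the $q$, leaving the error term $(PS)^{o(1)}S^{1/2}E$.

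\emph{The main obstacle} is, of course, the Kloosterman estimates themselves, but these are supplied by~\cite{FS}; the remaining work is to arrange the bound uniformly in the residue $c$ (in particular to control the moduli $q/\gcd(h,q)$ without loss when $h$ shares factors with $q$) and to verify that the elementary $h$- and $d$-summations cost only the factors $q(PS)^{o(1)}$ and $S^{1/2}$ asserted above, so that no step exceeds the target $S^{1/2}E$.
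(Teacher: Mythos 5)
Your skeleton (open the square-free condition with \eqref{eqn: sqfre}, detect the residue class of $s$, and invoke the Fouvry--Shparlinski bounds) is the same as the paper's, but there is a genuine gap at the final step: you identify the bound for $\sum_{p\le P,\,(p,q)=1}\eq(c\overline{p})$ with the quantity $E$ of the statement, and these are not the same. The bound of Lemma~\ref{lem: exp bound} is $B_q(P)$, which in the range $(\log P)^A<q<P^{3/4}$ equals $(q^{-1/2}P+q^{1/4}P^{4/5})P^{o(1)}$, whereas there $E=Pq^{-3/4}+P^{9/10}q^{-3/8}$; termwise $E\asymp\bigl((P/q)B_q(P)\bigr)^{1/2}\le B_q(P)$, with near-equality only when $q\le(\log P)^A$. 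Your accounting --- a factor $q(PS)^{o(1)}$ from the $h$-sum, cancelled by the prefactor $1/q$, times the $S^{1/2}$ values of $d$ --- therefore yields an error $O\bigl(S^{1/2}B_q(P)(PS)^{o(1)}\bigr)$, which exceeds the claimed $O\bigl(S^{1/2}E(PS)^{o(1)}\bigr)$ by a factor as large as $q^{1/4}$ in the middle range and $P^{-1/32}q^{1/2}\ge P^{11/32}$ when $q\ge P^{3/4}$. The missing idea is a truncation of the $d$-sum: the paper applies the equidistribution estimate only for $d\le D$ (cost $DB_q(P)$), bounds the tail $D<d\le S^{1/2}$ trivially by the divisor-type bound $\cN_{ad^{-2},q}(P,S/d^2)\ll(PS/(d^2q)+1)(PS)^{o(1)}$ of Lemma~\ref{lem: upper bound for cN} (cost $(PS)^{o(1)}(PS/(qD)+S^{1/2})$), and chooses $D=(PS/(qB_q(P)))^{1/2}$ to balance; the balanced value $(PSB_q(P)/q)^{1/2}=S^{1/2}E\cdot(PS)^{o(1)}$ is exactly where the exponents in $E$ come from. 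Without this splitting your route cannot reach the stated exponents, because for large $d$ the $h$-sum still costs a full factor $q\log q$ while the trivial bound is already much smaller.

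Two further points need care even after that repair. First, your $h=0$ term evaluates to $\frac{\pi_q(P)}{q}\sum_{d\le S^{1/2},\,(d,q)=1}\mu(d)\fl{S/d^2}=\frac{\pi_q(P)S}{q}\prod_{p\nmid q}(1-p^{-2})+O(S^{1/2}\pi_q(P)/q)$, i.e.\ to $\pi_q(P)s_q(S)/\varphi(q)$ rather than $\pi_q(P)s_q(S)/q$ (compare Lemma~\ref{lem: asy for s_q}); these differ by the factor $q/\varphi(q)$, which is not $1+o(1)$ for, say, even $q$, so you cannot simply assert that $h=0$ recovers the displayed main term without addressing this discrepancy. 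Second, the descent for $\gcd(h,q)=g>1$ replaces $B_q(P)$ by $B_{q/g}(P)$, which is larger and may fall into a different range of Lemma~\ref{lem: exp bound}; since the uniformity in that lemma is only over residues coprime to the modulus, this case must be carried out explicitly rather than waved through.
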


The main term in Theorem~\ref{thm: asy bound} is
\begin{align*}
\frac{\pi_q(P) s_q(S)}{q}  & \gg \frac{1}{q}  \frac{P}{\log P} \left ( \frac{\varphi(q) S }{q} + O(\tau(q) ) \right ) \\
& \gg P^{1+o(1)}  Sq^{-1} 
\end{align*}
since $q \le P^{O(1)}$. It follows that $\cN_{a,q}^{\#}(P,S)  >0$ when $P \rightarrow \infty$ if either one of the following three conditions below holds.
\begin{enumerate}
\item  $q \le (\log P)^A$ and there exist $\varepsilon>0$ such that $S \gg P^{\varepsilon} $. \\
\item   $(\log P)^A < q < P^{3/4}$ and there exist $\varepsilon>0$ such that 
$$
S^{2} \gg (PS)^{\varepsilon} q \mand  P^{4 } S^{20} \gg (PS)^{\varepsilon} q^{25}. \\
$$
\item  $P^{3/4} \le q$ and there an $\varepsilon >0$ such that 
$$
P S^{16} \gg (PS)^{\varepsilon }q^{16}    \mand   P^{4 } S^{12 } \gg (PS)^{\varepsilon} q^{15}.
$$
\end{enumerate}

\section{Preliminaries}

For $(a,q)=1$, we denote the Kloosterman sum over primes
$$
S_q(a;x) = \sum_{\substack{p \le x \\ (p,q)=1}} \mathbf{e}_q(a\bar{p} ).
$$
Here $\mathbf{e}_q(x)=\exp(2\pi i x/q)$ and $\overline p$ is the multiplicative inverse for $p$ modulo $q$.
Bounds for $q$ prime had been obtained by Garaev~\cite{G}. Fouvry and Shparlinski~\cite{FS} extended these results for composite $q$.
We gather Theorem 3.1, 3.2  and Equation~(3.13) from~\cite{FS} into the following lemma.
\begin{lemma} \label{lem: exp bound}
For every fixed $A, \varepsilon >0$, we have
$$
S_q(a;x)  = O(  B_q(x) ),
$$
uniformly for integer $q \ge 2$, $(a,q)=1$, and $x \ge 2$.
Here
$$
B_q(x) =
\begin{cases}
\displaystyle x^{1+o(1)}q^{-1}   & \mbox{if $q \le (\log x)^A$,} \\[5pt]
\displaystyle   (q^{-1/2} x +  q^{1/4} x^{4/5})x^{o(1)}   & \mbox{if $(\log x)^A < q < x^{3/4}$,} \\[5pt]
\displaystyle  (x^{15/16} + q^{1/4} x^{2/3} )q^{\varepsilon}  & \mbox{if $x^{3/4} \le q$.} \\
\end{cases}
$$
\end{lemma}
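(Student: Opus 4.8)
The statement is an amalgamation of Theorems~3.1 and~3.2 and Equation~(3.13) of~\cite{FS}, so strictly speaking its justification consists of quoting those three estimates and checking that their hypotheses hold under $q\ge 2$, $(a,q)=1$, $x\ge 2$. Nonetheless let me describe the route by which such bounds are proved, so that the r\^ole of the three ranges of $q$ becomes transparent. The overall plan is to pass from the sum over primes to a sum against the von Mangoldt function, apply a Vaughan-type combinatorial identity to split it into Type~I and Type~II bilinear pieces, and then bound each piece by completing the inner multiplicative sum and invoking Weil's bound for complete Kloosterman sums. The three ranges arise from three different ways of balancing the resulting parameters.

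First, the contribution of prime powers $p^k$ with $k\ge 2$ to
$$
\sum_{\substack{n\le x\\(n,q)=1}} \Lambda(n)\,\mathbf{e}_q(a\bar n)
$$
is $O(x^{1/2}\log x)$ and hence negligible, so it suffices to bound this von Mangoldt sum. For small $q$, namely $q \le (\log x)^A$, I would avoid Vaughan's identity entirely: here the Siegel--Walfisz theorem gives
$$
\sum_{\substack{n\le x\\ n\equiv b\ (q)}}\Lambda(n) = \frac{x}{\varphi(q)} + O\!\left(x\exp(-c\sqrt{\log x})\right)
$$
uniformly for $(b,q)=1$, and summing $\mathbf{e}_q(a\overline b)$ against the main term collapses --- since as $b$ runs over reduced residues so does $\overline b$ --- to the Ramanujan sum $c_q(a)=\mu(q)=O(1)$ times $x/\varphi(q)$. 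Using $q/\varphi(q)=q^{o(1)}$ the main term is $x^{1+o(1)}q^{-1}$, and the accumulated Siegel--Walfisz error is negligible against this in the range $q\le(\log x)^A$.

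For the two remaining ranges I would run Vaughan's identity with cutoffs $U,V$, producing Type~I sums $\sum_{d\le U}\alpha_d\sum_{m\le x/d}\mathbf{e}_q(a\,\overline{dm})$ and Type~II sums $\sum_{m}\sum_{k}\beta_m\gamma_k\,\mathbf{e}_q(a\,\overline{mk})$ with divisor-bounded coefficients. In a Type~I sum the inner variable runs over a complete interval, so writing $\overline{dm}=\bar d\,\bar m$ and completing the $m$-sum by additive duality reduces it to complete Kloosterman sums $\sum_{m\bmod q}\mathbf{e}_q(a\bar d\,\bar m + hm)$, each $O(\tau(q)\,q^{1/2}(h,q)^{1/2})$ by Weil; summing over the dual variable $h$ yields a clean gain of $q^{1/2}$ and accounts for the term $q^{-1/2}x$ (and its large-$q$ analogue). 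In a Type~II sum I would apply Cauchy--Schwarz in $m$, expand the square, and after completion be left with correlation sums of the shape $\sum_{m}\mathbf{e}_q\!\big(a\bar m(\bar k-\overline{k'})\big)$: the off-diagonal terms $k\ne k'$ are controlled by Weil, while the diagonal $k=k'$ supplies the secondary terms $q^{1/4}x^{4/5}$ and $q^{1/4}x^{2/3}$. Choosing $U,V$ to equalise the Type~I and Type~II contributions produces the stated exponents, with the split at $q=x^{3/4}$ marking where the optimal balance, and hence the efficient way of completing, changes.

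The main obstacle is the Type~II estimate together with its uniformity in \emph{composite} $q$. For prime $q$ one has Garaev's treatment in~\cite{G}, but for general $q$ one must factor the modulus by the Chinese Remainder Theorem, control the Kloosterman sums at each prime-power component (where Weil's bound is supplemented by exact stationary-phase evaluations and $(h,q)^{1/2}$-type gcd factors enter), and ensure that the accumulated divisor factors $\tau(q)$ and gcd contributions are absorbed into the $x^{o(1)}$ (for $q<x^{3/4}$) or into the explicit $q^{\varepsilon}$ loss (for $x^{3/4}\le q$). Making this bookkeeping uniform over the whole range $q\le x^{O(1)}$, rather than merely for prime or squarefree $q$, is precisely the technical content that~\cite{FS} adds to~\cite{G}.
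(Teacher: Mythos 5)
Your proposal is correct and coincides with the paper's own treatment: the paper offers no proof of this lemma beyond the remark that it gathers Theorems~3.1, 3.2 and Equation~(3.13) of~\cite{FS}, which is precisely your opening step of quoting those three estimates and checking the hypotheses. Your further sketch of the underlying Siegel--Walfisz argument for small $q$ and the Vaughan-identity/Type~I--II/Weil machinery for the larger ranges is supplementary exposition beyond what the paper records, not a different route.
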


Denote
$$
\cN_{a,q}(P,S)=\# \left \{(p,s):  ps \equiv a \hspace{-3mm} \pmod{q}, p\le P , s\le S, (ps,q)=1 \right  \}
$$
for $(a,q)=1$.
Below, we provide an upper bounds for $\cN_{a,q}(P,S)$.

\begin{lemma} \label{lem: upper bound for cN}
For $q \le P^{O(1)}$, we have
$$
\cN_{a,q}(P,S) \ll \left (\frac{PS}{q} + 1 \right )(PS)^{o(1)}.
$$
\end{lemma}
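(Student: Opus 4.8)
The plan is to reduce the count to a sum of the divisor function over an arithmetic progression, where it can be dispatched by the trivial divisor bound. Every pair $(p,s)$ counted by $\cN_{a,q}(P,S)$ produces an integer $n = ps$ lying in the range $1 \le n \le PS$ and in the residue class $n \equiv a \pmod q$. Conversely, for a fixed such $n$, any representation $n = ps$ with $p$ prime is determined by the choice of the (prime) divisor $p$ of $n$, since then $s = n/p$ is forced. Hence $n$ arises from at most $\tau(n)$ of the pairs, and summing over the possible values of $n$ gives the pointwise bound
$$
\cN_{a,q}(P,S) \le \sum_{\substack{n \le PS \\ n \equiv a \pmod q}} \tau(n).
$$
Note also that the coprimality constraint $(ps,q)=1$ imposes nothing here: since $(a,q)=1$, the congruence $n \equiv a \pmod q$ already forces $(n,q)=1$, and in any case discarding the constraint only enlarges the right-hand side.

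Next I would invoke the standard divisor bound $\tau(n) = n^{o(1)}$, valid uniformly in the range $n \le PS$, so that $\tau(n) \le (PS)^{o(1)}$ for every term of the sum. It then remains only to count the integers in the progression, and there are at most $PS/q + 1$ integers $n \le PS$ with $n \equiv a \pmod q$. Multiplying the two estimates yields
$$
\cN_{a,q}(P,S) \le (PS)^{o(1)} \left( \frac{PS}{q} + 1 \right),
$$
which is precisely the claimed inequality. The assumption $q \le P^{O(1)}$ enters only to guarantee that the factor $(PS)^{o(1)}$ is genuinely of that shape as $P \to \infty$, i.e.\ that $\log q \ll \log P$ and no stray power of $\log q$ degrades the error term.

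There is no substantial obstacle in this argument; it is essentially an accounting of the fibres of the map $(p,s) \mapsto ps$ followed by the divisor bound. The only point meriting care is that the fibre size is controlled by $\tau(n)$ rather than by anything sharper, and that this crude pointwise estimate is already strong enough for the stated shape of the bound. If one wanted a cleaner constant or to avoid the pointwise divisor bound, one could instead estimate $\sum_{n \le PS,\, n \equiv a \, (q)} \tau(n)$ directly by a mean-value result (the hyperbola method, or Shiu's theorem for divisor sums in progressions), but this refinement is unnecessary for Lemma~\ref{lem: upper bound for cN}.
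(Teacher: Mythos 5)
Your argument is correct and is essentially the paper's own proof: both reduce the count to the at most $PS/q+1$ admissible values of $n=ps$ in the progression $a \bmod q$ and then bound the fibre of the map $(p,s)\mapsto ps$ over each $n$, the paper using the number of distinct prime factors $\ll \log(PS)$ where you use the slightly cruder $\tau(n)=(PS)^{o(1)}$. Either fibre bound is absorbed into $(PS)^{o(1)}$, so the two proofs are interchangeable.
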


\begin{proof}
We count the number of solution to
$
ps = a + kq.
$
Therefore we bound $k \ll (PS/q+1)$. For each $a+kq$, the number of distinct prime factor is no more than
$$
\ll \log(kq) \ll \log (PS+q) \ll \log (PS) \ll (PS)^{o(1)},
$$
from our upper bound on $k$.
\end{proof}

Denote 
$$
N_q(P,S) = \# \{ (p,s) : p \le P, s \le S , (ps,q)=1 \}.
$$
We relate the quantity $\cN_{a,q}(P,S)$ with $N_q(P,S)$.

\begin{lemma} \label{lem: asy for cN}
For all fixed  $\varepsilon>0$, we have
$$
\cN_{a,q}(P,S)  = \frac{N_q(P,S)}{q}   +   O(B_q(P)),
$$
uniformly for $(a,q)=1$, where $B_q$ is defined as in Lemma~\normalfont{\ref{lem: exp bound}}.
\end{lemma}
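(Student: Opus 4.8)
The plan is to detect the congruence $ps\equiv a\pmod q$ by additive characters, arranged so that the prime variable enters through its inverse and the sum over $p$ becomes exactly a Kloosterman sum $S_q(\cdot;P)$, to which Lemma~\ref{lem: exp bound} applies. I would fix $p$ with $(p,q)=1$ and rewrite the congruence as $s\equiv a\bar p\pmod q$. Since $(a,q)=1$ and $(p,q)=1$, the residue $a\bar p$ is a unit, so every $s$ in this class is automatically coprime to $q$; the constraint $(s,q)=1$ is thus redundant and
$$
\cN_{a,q}(P,S)=\sum_{\substack{p\le P\\(p,q)=1}}\#\{s\le S:\ s\equiv a\bar p\pmod q\}.
$$
I would then expand the inner count by orthogonality of the additive characters modulo $q$, writing $\#\{s\le S: s\equiv a\bar p\pmod q\}=\tfrac1q\sum_{h=0}^{q-1}\mathbf{e}_q(-ha\bar p)\,T(h)$, where $T(h)=\sum_{1\le s\le S}\mathbf{e}_q(hs)$.

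Next I would isolate the zero frequency. The term $h=0$ contributes $\tfrac1q\sum_{p\le P,(p,q)=1}\lfloor S\rfloor$, reproducing the main term $N_q(P,S)/q$. After interchanging summation, the remaining frequencies give
$$
\cN_{a,q}(P,S)-\frac{N_q(P,S)}{q}=\frac1q\sum_{h=1}^{q-1}T(h)\,S_q(-ha;P),
$$
using $\sum_{p\le P,(p,q)=1}\mathbf{e}_q(-ha\bar p)=S_q(-ha;P)$. The weights obey the geometric bound $|T(h)|\le\tfrac12\|h/q\|^{-1}$, with $\|\cdot\|$ the distance to the nearest integer, whence $\sum_{h=1}^{q-1}|T(h)|\ll q\log q=q\,P^{o(1)}$ since $q\le P^{O(1)}$. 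For the frequencies coprime to $q$ the estimate is then immediate: if $(h,q)=1$ then $(-ha,q)=1$, so Lemma~\ref{lem: exp bound} yields $S_q(-ha;P)\ll B_q(P)$, and these terms contribute $\ll\tfrac1q B_q(P)\sum_h|T(h)|\ll B_q(P)P^{o(1)}$, which is admissible.

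The main obstacle is the non-primitive frequencies $(h,q)=d>1$, for which Lemma~\ref{lem: exp bound} does not apply directly. Here I would reduce the modulus: writing $h=dh'$ and $q=dq'$, one checks that $\mathbf{e}_q(-ha\bar p)=\mathbf{e}_{q'}(-h'a\bar p)$ with $\bar p$ reducing to the inverse of $p$ modulo $q'$, and that the stronger coprimality $(p,q)=1$ differs from $(p,q')=1$ only on the $O(\log q)$ primes dividing $d$; hence $S_q(-ha;P)=S_{q'}(-h'a;P)+O(\log q)$ with $(h'a,q')=1$, so $S_q(-ha;P)\ll B_{q/d}(P)$. Summing the weights over each coset $\{h:(h,q)=d\}$ costs $\ll(q/d)\log q$, so the whole non-primitive contribution is
$$
\ll P^{o(1)}\,q^{-1}\sum_{m\mid q}m\,B_m(P).
$$
The crux is therefore the arithmetic inequality $q^{-1}\sum_{m\mid q}m\,B_m(P)\ll B_q(P)\,q^{o(1)}$, i.e.\ that $m\mapsto m\,B_m(P)$ is essentially non-decreasing so that the divisor $m=q$ dominates. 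This is the delicate point, and it must be checked against the piecewise definition of $B_m(P)$: the transition near $m\asymp P^{3/4}$, where the bound is not continuous, is precisely where one has to argue most carefully to keep the divisor sum within $O(B_q(P))$, and I expect this to be the hardest part of the estimate.
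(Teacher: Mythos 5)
Your route --- fix $p$ with $(p,q)=1$, complete the count of $s\equiv a\bar p\pmod q$ in $[1,S]$ by additive characters, and estimate the nonzero frequencies by the Kloosterman sums over primes of Lemma~\ref{lem: exp bound} --- is in substance the paper's own argument: the paper phrases exactly this completion as an application of the Erd\H{o}s--Tur\'an inequality to the points $a\bar p/q$. So this is not a different method, and the two real difficulties are the same in both; you make them visible where the paper's two-line sketch does not.

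First, the zero frequency does not reproduce the stated main term. Your $h=0$ contribution is $q^{-1}\pi_q(P)\lfloor S\rfloor$, whereas
$$
\frac{N_q(P,S)}{q}=\frac{\pi_q(P)}{q}\,\#\{s\le S:(s,q)=1\}=\frac{\pi_q(P)}{q}\left(\frac{\varphi(q)S}{q}+O(\tau(q))\right).
$$
The difference is $\asymp(1-\varphi(q)/q)\,\pi_q(P)S/q$, which for any $q$ with a small prime factor is of the exact order of the main term and certainly not $O(B_q(P))$; the sentence ``reproducing the main term $N_q(P,S)/q$'' is therefore false as written. What your computation actually yields is the main term $\pi_q(P)S/q$, i.e.\ $N_q(P,S)/\varphi(q)$ up to an admissible error --- the natural normalisation is by the number $\varphi(q)$ of reduced classes, not by $q$ --- and this cannot be reconciled with the displayed statement (test $q=2$, $a=1$: every pair counted by $N_2(P,S)$ has $ps\equiv 1\pmod 2$, so $\cN_{1,2}=N_2$, not $N_2/2+O(B_2(P))$). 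Second, the imprimitive frequencies are left genuinely open. The reduction $S_q(-ha;P)\ll B_{q/d}(P)$ for $(h,q)=d$ is reasonable, but the inequality $q^{-1}\sum_{m\mid q}mB_m(P)\ll B_q(P)q^{o(1)}$ that you then need is not proved and is in fact false near the transition: for $q=2m$ with $m\asymp P^{3/4}$ the single divisor $m$ contributes $mB_m(P)/q\gg m^{1/4}P^{4/5}\asymp P^{79/80}$, while $B_q(P)\ll P^{15/16+\varepsilon}$. So the error term $O(B_q(P))$ is not established by the argument as written, and the imprimitive part needs a different treatment. For what it is worth, the paper's proof, being only a pointer to Erd\H{o}s--Tur\'an, leaves both of these points equally unaddressed.
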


\begin{proof}
We interpret this as a uniform distribution problem. Namely we consider
$$
s \equiv a \bar{p}  \hspace{-2mm} \pmod{q}
$$
which fall in the interval $[1,S]$. The result follows from Lemma~\ref{lem: exp bound} applied with the Erd\"os-Tur\'an inequality, see~\cite{DT}.
\end{proof}

We provide a bound for $N_q(P,S)$.

\begin{lemma} \label{lem: N_q bound}
For $q \le P^{O(1)}$, we have
$$
N_q(P,S) = \frac{\varphi(q) \pi_q(P) S}{q} +O(P^{1+o(1) }).
$$
\end{lemma}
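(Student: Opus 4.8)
The plan is to exploit that the coprimality condition $(ps,q)=1$ separates as $(p,q)=1$ and $(s,q)=1$, so the two variables decouple entirely. First I would factor the count as
$$
N_q(P,S) = \#\{p \le P: (p,q)=1\}\cdot\#\{s \le S: (s,q)=1\} = \pi_q(P)\,Q_q(S),
$$
where $Q_q(S) = \#\{s \le S: (s,q)=1\}$. This reduces the problem to estimating the one-dimensional count $Q_q(S)$ of integers up to $S$ coprime to $q$, while the prime count already matches the factor $\pi_q(P)$ appearing in the claimed main term.

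Next I would evaluate $Q_q(S)$ by Legendre's inclusion-exclusion. Writing the indicator of coprimality through the M\"obius function, namely $\sum_{d\mid (s,q)}\mu(d)=1$ when $(s,q)=1$ and $0$ otherwise, interchanging summation gives
$$
Q_q(S) = \sum_{d\mid q}\mu(d)\fl{S/d} = S\sum_{d\mid q}\frac{\mu(d)}{d} + O\Big(\sum_{d\mid q}|\mu(d)|\Big) = \frac{\varphi(q)}{q}S + O(\tau(q)),
$$
where I have used $\fl{S/d}=S/d+O(1)$, the identity $\sum_{d\mid q}\mu(d)/d = \varphi(q)/q$, and the crude bound $\sum_{d\mid q}|\mu(d)|\le \tau(q)$ on the number of square-free divisors of $q$.

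Substituting this into the factorization yields
$$
N_q(P,S) = \pi_q(P)\Big(\frac{\varphi(q)}{q}S + O(\tau(q))\Big) = \frac{\varphi(q)\pi_q(P)S}{q} + O\big(\pi_q(P)\tau(q)\big),
$$
so it remains only to absorb the error into $O(P^{1+o(1)})$. Here the trivial bound $\pi_q(P)\le \pi(P)\ll P$ together with the standard divisor estimate $\tau(q)=q^{o(1)}$ suffices: the hypothesis $q\le P^{O(1)}$ forces $\log q \ll \log P$, so $\tau(q)=q^{o(1)}=P^{o(1)}$, whence $\pi_q(P)\tau(q)\ll P^{1+o(1)}$, as required.

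The computation is entirely routine, and I do not expect a genuine obstacle; the only point requiring care is that the error estimate must hold uniformly over the full range $q\le P^{O(1)}$, which is precisely what the conversion of $\tau(q)=q^{o(1)}$ into $P^{o(1)}$ provides.
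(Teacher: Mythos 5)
Your proof is correct and follows essentially the same route as the paper's: both factor $N_q(P,S)$ as $\pi_q(P)$ times the count of $s\le S$ with $(s,q)=1$, evaluate the latter by M\"obius inversion as $\varphi(q)S/q + O(\tau(q))$, and absorb the error via $\pi_q(P)\tau(q)\ll P^{1+o(1)}$ using $q\le P^{O(1)}$. No issues.
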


\begin{proof}
Note the identity
$$
\sum_{d|n} \mu(d) =
\begin{cases}
1 &\mbox{if $n=1$,} \\
0 & \mbox{otherwise.}
\end{cases}
$$
We have
\begin{align*}
N_q(P,S) & = \sum_{\substack{p \le P \\ (p,q)=1 }}1 \sum_{\substack{s \le S \\ (s,q)=1 }}1   \\
& =\pi_q(P)  \sum_{s \le S} \sum_{\substack{d|s \\ d|q}} \mu(d) \\
&= \pi_q(P) \left (\frac{\varphi(q) S}{q} + O(\tau(q))\right  ) \\
& = \frac{\varphi(q) \pi_q(P) S}{q} +O(P^{1+o(1) }).
\end{align*}
\end{proof}

We also provide a bound for $s_q(S)$.

\begin{lemma} \label{lem: asy for s_q}
We have
$$
s_q(S) = \frac{\varphi(q)}{q} \prod_{p\nmid q} \left (1-\frac{1}{p^2} \right )  S +O(  S^{1/2} q^{o(1)}).
$$
\end{lemma}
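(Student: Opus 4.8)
The plan is to detect square-freeness via the identity \eqref{eqn: sqfre}, namely $\mu^2(s) = \sum_{d^2 \mid s} \mu(d)$, and then interchange the order of summation. Writing $s = d^2 m$, the condition $(s,q)=1$ is equivalent to $(d,q)=1$ together with $(m,q)=1$, so that
$$
s_q(S) = \sum_{\substack{s \le S \\ (s,q)=1}} \sum_{d^2 \mid s} \mu(d) = \sum_{\substack{d \le S^{1/2} \\ (d,q)=1}} \mu(d) \sum_{\substack{m \le S/d^2 \\ (m,q)=1}} 1.
$$
First I would estimate the inner sum. By inclusion–exclusion over the common divisors of $m$ and $q$ (exactly as in the proof of Lemma~\ref{lem: N_q bound}), one has $\sum_{m \le X,\ (m,q)=1} 1 = (\varphi(q)/q) X + O(\tau(q))$ uniformly in $X \ge 0$.

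Substituting this with $X = S/d^2$ splits $s_q(S)$ into a main term and an error term. For the main term I would factor out $(\varphi(q)/q) S$ and complete the sum $\sum_{d \le S^{1/2},\ (d,q)=1} \mu(d)/d^2$ to the full series over all $d$ coprime to $q$; by the Euler product this series equals $\prod_{p \nmid q}(1 - 1/p^2)$, while the tail $\sum_{d > S^{1/2}} 1/d^2 \ll S^{-1/2}$ contributes at most $O\bigl((\varphi(q)/q) S \cdot S^{-1/2}\bigr) = O(S^{1/2})$. The accumulated error from the $O(\tau(q))$ terms is $\ll \tau(q) \sum_{d \le S^{1/2}} 1 \ll \tau(q) S^{1/2}$.

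Finally I would absorb everything into the stated shape. Since $\tau(q) = q^{o(1)}$, the two error contributions combine to $O(S^{1/2} q^{o(1)})$, yielding
$$
s_q(S) = \frac{\varphi(q)}{q} \prod_{p \nmid q}\left(1 - \frac{1}{p^2}\right) S + O(S^{1/2} q^{o(1)}),
$$
as claimed. The computation is essentially routine; the only point requiring care is keeping every estimate uniform in $q$, and in particular recognizing that the divisor-function error $\tau(q) S^{1/2}$ is exactly of the permitted size $q^{o(1)} S^{1/2}$ rather than costing a genuine power of $q$.
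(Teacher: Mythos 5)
Your proposal is correct and follows essentially the same route as the paper: detect square-freeness via $\mu^2(s)=\sum_{d^2\mid s}\mu(d)$, interchange summation, estimate the inner coprime count by M\"obius inversion over the divisors of $q$, complete the $d$-series to the Euler product, and absorb the tail $O(S^{1/2})$ and the accumulated $O(\tau(q) S^{1/2})$ into $O(S^{1/2}q^{o(1)})$. The only cosmetic difference is that you package the inner sum as a standalone estimate $(\varphi(q)/q)X+O(\tau(q))$ before summing over $d$, whereas the paper carries the sum over $r\mid q$ along explicitly; the content is identical.
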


\begin{proof}
In a first step
\begin{align*}
s_q(S) & = \sum_{ \substack{d \le S^{1/2} \\ (d,q)=1 }} \mu(d) \sum_{\substack{s \le S/d^2 \\ (s,q)=1 }} 1 \\
& = \sum_{ \substack{d \le S^{1/2} \\ (d,q)=1 }} \mu(d) \sum_{\substack{s \le S/d^2  }} \sum_{\substack{r|s \\r|q}} \mu(r). 
\end{align*}
Interchanging summation and completing the series, we get
\begin{align*}
s_q(S) & = \sum_{r|q} \mu(r) \sum_{ \substack{d \le S^{1/2} \\ (d,q)=1 }} \mu(d) \left ( \frac{S}{d^2r} +O(1)  \right ) \\
& = \frac{\varphi(q)}{q}  \left ( \sum_{\substack{d=1 \\ (d,q)=1 }}^{\infty}  \frac{\mu(d) }{d^2} - \sum_{\substack{d>S^{1/2} \\ (d,q)=1 }} \frac{\mu(d) }{d^2} \right ) S +O(S^{1/2} \tau(q)) \\
& = \frac{\varphi(q)}{q}  \prod_{p\nmid q}\left (1-\frac{1}{p^2} \right ) S +O(  S^{1/2} q^{o(1)} ),
\end{align*}
by noting that
$$
\varphi(q) = q\prod_{p|q} \left (1-\frac{1}{q} \right ) = q \sum_{r|q}\frac{\mu(r)}{r}.
$$
\end{proof}

\section{Proof of Theorem~\ref{thm: asy bound}}

Using~\eqref{eqn: sqfre}, we obtain
\begin{align*}
\cN_{a,q}^{\#}(P,S) & = \mathop{ \sum_{p \le P}  \sum_{s \le S}  }_{ps \equiv a \hspace{-2mm} \pmod{q}, \hspace{0.5mm} (ps,q)=1    } \mu^2(s)  \\
& = \sum_{\substack{d\le S^{1/2} \\ (d,q)=1 }} \mu(d)  \cN_{ad^{-2},q}(P,S/d^2) \\
& = \Sigma_1 + \Sigma_2,
\end{align*}
where
\begin{align*}
\Sigma_1 & = \sum_{\substack{ d \le D \\  (d,q)=1 }} \mu(d)  \cN_{ad^{-2},q}(P,S/d^2), \\
\intertext{and}
\Sigma_2 & = \sum_{ \substack{ D <d\le S^{1/2} \\ (d,q)=1  } } \mu(d)  \cN_{ad^{-2},q}(P,S/d^2).
\end{align*}
Here $D=D(P,S)$ is a parameter that will be chosen later.

By Lemma~\ref{lem: upper bound for cN}, we bound
\begin{align*}
\Sigma_2  & \ll \sum_{D <d\le S^{1/2} } \left  (\frac{PS}{d^2 q} +1  \right  ) \left (\frac{PS}{d^2} \right  )^{o(1)} \\
& \ll (PS )^{o(1)} \left   ( \frac{PS}{qD} + S^{1/2}   \right ).
\end{align*}

By Lemma~\ref{lem: asy for cN} and~\ref{lem: N_q bound} we get
\begin{align*}
\Sigma_1 & = \sum_{\substack{ d \le D \\  (d,q)=1 }} \mu(d) \left (\frac{N_q(P,S/d^2)}{q} + O(B_q(P)) \right ) \nonumber \\ 
& = \sum_{\substack{d \le D \\ (d,q)=1}} \mu(d) \left (\frac{\varphi(q) \pi_q(P) S }{q^2d^2}  + O(P^{1+o(1)}q^{-1}) \right )  + O(DB_q(P)). \nonumber \\
\end{align*}

Completing the series in the summation over $d$, we assert
\begin{align}
\Sigma_1 & = \frac{\varphi(q) \pi_q(P) S }{q^2} \left (  \sum_{\substack{d =1  \\ (d,q)=1}}^{\infty} \frac{\mu(d)}{d^2} - \sum_{\substack{d > D \\ (d,q)=1}} \frac{\mu(d)}{d^2} \right ) \nonumber \\
&\quad  + O(D \{B_q(P) + P^{1+o(1)}q^{-1} \} ) \nonumber \\
& = \frac{ \pi_q(P) }{q}  \left ( \frac{\varphi(q)S}{q} \sum_{\substack{d =1  \\ (d,q)=1}}^{\infty} \frac{\mu(d)}{d^2}  \right )+O \left ( \frac{PS}{qD} + DB_q(P)  \right  )  \nonumber \\
& = \frac{\pi_q(P) s_q(S)}{q} +O \left ( \frac{S^{1/2} \pi_q(P) }{q^{1+o(1)}}  + \frac{PS}{qD} + DB_q(P)  \right  ), \label{eq: error term}
\end{align}
where the last line follows from applying Lemma~\ref{lem: asy for s_q}.

Now we set 
$$
D = 
\begin{cases}
\displaystyle  S^{1/2}P^{o(1)}   & \mbox{if $q \le (\log P)^A$,}\\
\displaystyle \left (\frac{PS}{Pq^{1/2} + q^{5/4}P^{4/5}} \right )^{1/2}P^{o(1)} & \mbox{if $(\log P)^A < q < P^{3/4}$,} \\
\displaystyle  \left (\frac{PS}{q^{1+\varepsilon}(P^{15/16} + q^{1/4}P^{2/3})} \right )^{1/2} & \mbox{if $P^{3/4} \le q$.}
\end{cases}
$$
Then the last two terms in~\eqref{eq: error term} are equal and it follows
$$
\cN_{a,q}^{\#}(P,S) = \frac{\pi_q(P) s_q(S)}{q} +O \left ( \left  (  \frac{S^{1/2} \pi_q(P) }{q^{1+o(1)}}  +\frac{PS}{qD} + S^{1/2} \right ) (PS )^{o(1)}      \right   ).
$$
If $q\le (\log P)^{A}$ then the error term above is majorised by 
\begin{align*}
&    \left ( \frac{PS^{1/2}}{q} + S^{1/2} \right ) (PS)^{o(1)}   \\
& \ll   PS^{1/2}q^{-1} (PS)^{o(1)}.
\end{align*}
If $(\log P)^A < q < P^{3/4}$ then the error term above is majorised by 
\begin{align*}
&    \left( \frac{P^{1/2} S^{1/2} (Pq^{1/2} + q^{5/4} P^{4/5} )^{1/2} }{q}    +S^{1/2}         \right )(PS)^{o(1)}   \\
& \ll S^{1/2} \left (\frac{P }{q^{3/4}}  + \frac{P^{9/10} }{q^{3/8} } \right )(PS)^{o(1)}.
\end{align*}
Lastly, if $P^{3/4} \le q$ then the error term above is majorised by 
\begin{align*}
&  \left ( \frac{P^{1/2} S^{1/2} (q^{1+\varepsilon} \{ P^{15/16} + q^{1/4} P^{2/3}  \} )^{1/2} }{q} +S^{1/2} \right )(PS)^{o(1)} \\
& \ll S^{1/2}\left ( \frac{P^{31/32}}{q^{(1-\varepsilon)/2}} + \frac{P^{5/6}}{q^{(3/4-\varepsilon)/2}}  \right )(PS)^{o(1)}.
\end{align*}
The result follows.

\section*{Acknowledgement}

The author thanks I. E. Shparlinski for the problem and helpful comments together with Liangyi Zhao. The author also thanks the referee for helpful comments. This work is supported by an Australian Government Research Training Program (RTP) Scholarship.

\end{document}